\documentclass[a4paper,11pt]{amsart}

\usepackage{amsmath,amssymb,amsthm,latexsym,amscd,mathrsfs}
\usepackage{indentfirst}
\usepackage{stmaryrd}
\usepackage{graphicx}
\usepackage{extarrows}
\usepackage{multirow}
\usepackage{latexsym}
\usepackage{amsfonts}
\usepackage{color}
\usepackage{pictexwd,dcpic}
\usepackage{graphicx}
\usepackage{psfrag}
\usepackage{hyperref}
\usepackage{comment}
\usepackage{enumerate}
\usepackage{dutchcal}

\numberwithin{equation}{section} \theoremstyle{plain}
\newtheorem{thm}{Theorem}[section]

\newtheorem{prop}[thm]{Proposition}
\newtheorem{lem}[thm]{Lemma}

\newtheorem{defn}[thm]{Definition}

\newtheorem{prob}[thm]{Problem}

\makeatletter

\def\<{\langle}
\def\>{\rangle}
\def\({\left(}
\def\){\right)}
\def\[{\left[}
\def\]{\right]}

\makeatother

\title{Weakly stable irreducible Yang-Mills fields over $S^4$}

\author[J.Q. Ge]{Jianquan Ge}
\address{School of Mathematical Sciences, Laboratory of Mathematics and Complex Systems, Beijing Normal University, Beijing 100875, P.R. CHINA.}
\email{jqge@bnu.edu.cn}

\author[L.X. Xiao]{Lixin Xiao$^{*}$}
\address{School of Mathematical Sciences, Laboratory of Mathematics and Complex Systems, Beijing Normal University, Beijing 100875, P.R. CHINA.}
\email{lixinxiao@mail.bnu.edu.cn}

\subjclass[2020]{Primary 58E15; Secondary 53C07.}
\keywords{Yang-Mills Fields, weakly stable, irreducible connections}
\thanks {$^{*}$ the corresponding author.}
\thanks{J. Q. Ge is partially supported by NSFC (No. 12571049) and the Fundamental Research Funds for the Central Universities.}

\begin{document}
\maketitle

\begin{abstract}
Addressing Yau's conjecture (Problem 117) on $S^4$, we investigate the self-duality of weakly stable Yang-Mills fields under the assumption of irreducibility. For structure groups with a simple Lie algebra, we prove that any weakly stable irreducible connection must be either self-dual or anti-self-dual. Furthermore, we demonstrate that if the Lie algebra admits a non-trivial abelian center, no irreducible Yang-Mills fields can exist over $S^4$.
\end{abstract}

\section{Introduction}

This article mainly focuses on Problem 117 in Yau's ``Problem Section" (see \cite{Y14}). 
\begin{prob}
Prove that any Yang-Mills field on $S^{4}$ is either self-dual or anti-self-dual.
\end{prob}
Partial results were given in two papers in 1979 by Bourguignon, Lawson and Simons \cite{BLS79} and in 1981 by Bourguignon and Lawson \cite{BL81} under stability and structure group restrictions. 
\begin{thm}\label{BL}
    \cite{BL81} Any weakly stable Yang-Mills field on $S^{4}$ with structure group $G=SU(2), SU(3)$ or $U(2)$ is either self-dual or anti-self-dual.
\end{thm}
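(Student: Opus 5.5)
The plan follows the Bourguignon--Lawson strategy: construct explicit test variations from the conformal geometry of $S^4$ and feed them into the second variation formula. Let $\nabla$ be a weakly stable Yang--Mills connection on a bundle $E\to S^4$ with curvature $R$, and split $R=R^+ + R^-$ into self-dual and anti-self-dual parts. Regard $S^4\subset\mathbb{R}^5$. Each $v\in\mathbb{R}^5$ gives a conformal vector field $V=v^T$, the tangential projection of $v$. Set
\[
\varphi_v=\iota_{V}R \in \Omega^1(\mathfrak{g}_E).
\]
This is the infinitesimal change of the connection under the conformal flow generated by $V$, up to gauge. Because the Yang--Mills functional is conformally invariant in dimension four but the flow is not isometric, these variations should detect the failure of self-duality.

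\textbf{Summing the second variation.} Compute the second variation $\mathcal{S}(\varphi_v,\varphi_v)=\int \langle \mathcal{L}^\nabla \varphi_v,\varphi_v\rangle$, where
\[
\mathcal{L}^\nabla\varphi = \nabla^*\nabla\varphi+\varphi\circ \mathrm{Ric}+2\,\mathfrak{R}(\varphi),
\]
as in \cite{BLS79}, and sum over an orthonormal basis $v_1,\dots,v_5$. The key identities are $\sum_i V_i\otimes V_i=g$ and $\nabla V=-\langle x,v\rangle\,\mathrm{Id}$. Using them, the Weitzenböck identity and the Yang--Mills equation $d^{\nabla*}R=0$, the quadratic terms in $\nabla R$ should cancel. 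One is left with a combination of $\int|R|^2$ and the cubic term $\int\langle [R_{ij},R_{jk}],R_{ki}\rangle$. In dimension four the cubic term decomposes into $\pm$ parts, and the cross terms between $R^+$ and $R^-$ vanish because $\Lambda^+$ and $\Lambda^-$ commute as $\mathfrak{so}(4)$-ideals. In \cite{BLS79} this yields only $\sum_i\mathcal{S}(\varphi_{v_i},\varphi_{v_i})=0$ for Yang--Mills fields, so the conformal fields alone do not give a contradiction.

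\textbf{Separating $R^+$ from $R^-$.} Replace $\varphi_v$ by the mixed variations $\varphi^{+}_v=\iota_V R^+$ and $\varphi^-_v=\iota_V R^-$, or by $\iota_V R^{+}-\iota_V R^{-}$. Summing the second variation over $v$ should give an expression of the form
\[
\sum_i \mathcal{S}(\varphi^\pm_{v_i},\varphi^\pm_{v_i}) = -c\int \langle R^+ , R^-\rangle\text{-type term} + (\text{cubic terms in } R^\pm),
\]
with $c>0$. The fiberwise quadratic form pairing $R^+$ with $R^-$ through the Lie bracket has to be made negative.

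\textbf{Main obstacle.} The hardest step is controlling the cubic bracket terms $\langle [R^\pm_{ij},R^\pm_{jk}],R^\pm_{ki}\rangle$ against $|R^\pm|^2$ pointwise. This is where the structure group enters. For $\mathfrak{su}(2)$, $\mathfrak{su}(3)$ and $\mathfrak{u}(2)$, I would use an explicit algebraic inequality:
\[
|[A,B]|\le c_{\mathfrak g}\,|A|\,|B|,
\]
with a sharp constant for these small Lie algebras, possibly after diagonalizing $R^+$ as a map $\Lambda^+\to\mathfrak g$ via its singular value decomposition. The goal is to show the summed second variation is strictly negative unless $R^+=0$ or $R^-=0$. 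Weak stability then forces one of them to vanish.

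For the $\mathfrak{u}(2)$ case, I would first split off the $\mathfrak{u}(1)$ center. The central part of the curvature is a harmonic $2$-form, and on $S^4$ it must vanish because $H^2(S^4)=0$. This reduces the $\mathfrak{u}(2)$ case to $\mathfrak{su}(2)$.
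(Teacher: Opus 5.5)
\textbf{The gap is in your ``Separating $R^+$ from $R^-$'' step, and it sinks the strategy.} The averaged second variation $\sum_A\mathcal{S}(\iota_{V_A}R^\pm,\iota_{V_A}R^\pm)$ contains no negative term coupling $R^+$ and $R^-$. It vanishes identically for every Yang--Mills connection on $S^4$:
\begin{itemize}
\item Bianchi plus the Yang--Mills equation give $d_DR^\pm=0=d_D^*R^\pm$. With $V=\nabla f$ and $\nabla V=-f\,\mathrm{Id}$, one gets $d_D(\iota_VR^+)=\nabla_VR^+-2fR^+$, which is again self-dual.
\item Using $\sum_AV_A\otimes V_A=g$, $\sum_Af_A^2=1$ and $\sum_Af_AV_A=0$, the sum becomes $\int|\nabla R^+|^2+4\|R^+\|^2$ plus the cubic term $\int\sum_{i,j,k}\langle R^+_{ij},[R^+_{ki},R^+_{kj}]\rangle$.
\item All mixed $R^+$--$R^-$ brackets drop out pointwise. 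After averaging they factor through the matrix commutator $[\Lambda^+,\Lambda^-]=0$ in $\mathfrak{so}(4)$; for instance $\sum_k[R^+_{ki},R^+_{kj}]$ is self-dual, hence orthogonal to $R^-$.
\item The Weitzenb\"ock formula for the harmonic form $R^+$, whose curvature term likewise sees no $R^-$, cancels the rest exactly. This is the same computation that gives a multiple of $(4-n)\|R\|^2$ for $\iota_VR$ on $S^n$.
\end{itemize}
Also, $\langle R^+,R^-\rangle\equiv 0$ pointwise, so a ``$\langle R^+,R^-\rangle$-type term'' cannot carry a sign. Hence there is nothing for $|[A,B]|\le c_{\mathfrak g}|A||B|$ to make strictly negative. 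Such a bound also holds in every compact Lie algebra, so it could never explain why $SU(2)$, $SU(3)$, $U(2)$ differ from $SO(4)$, whose Levi-Civita connection on $S^4$ is weakly stable but not an instanton.

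\textbf{The missing idea is to use the equality case.} Each $\mathcal{S}(\iota_VR^\pm)\ge 0$ and the average is $0$, so every $\iota_VR^\pm$ lies in the kernel of $\mathfrak{F}^D$. Using $d_D\mathcal{L}_V-\mathcal{L}_Vd_D=-[\iota_VR\wedge\cdot\,]$ and $*R=R^+-R^-$, one finds that $\mathfrak{F}^D(\iota_VR^+)$ equals, up to sign, $*\bigl([\iota_VR^-\wedge R^+]+[\iota_VR^+\wedge R^-]\bigr)$. The $V_A$ span each $T_xS^4$, and $\Lambda^+\otimes\Lambda^-\cong S^2_0\mathbb{R}^4$ is irreducible. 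So its vanishing is equivalent to $[R^+_{ij},R^-_{kl}]=0$ pointwise, i.e.\ the commutation $[\mathcal{K}^+_x,\mathcal{K}^-_x]=0$ of Lemma~\ref{splitting}.

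The structure group then enters through commuting subalgebras, not through norm constants:
\begin{itemize}
\item In $\mathfrak{su}(2)$, two commuting nonzero subspaces lie on a common line. So where $R^+\neq 0\neq R^-$ one has $R=\omega\otimes\sigma$ locally. Bianchi and Yang--Mills force $\nabla\sigma=0$ there, since $\omega^+\wedge\nabla\sigma=0$ with $\omega^+$ nondegenerate. A globalization argument (unique continuation or analyticity, plus holonomy reduction) together with $H^2(S^4)=0$ then rules this out.
\item In $\mathfrak{su}(3)$, commuting pairs such as $\mathfrak{su}(2)\oplus\mathfrak{u}(1)$ do occur pointwise. So one must additionally reduce the holonomy and kill the abelian part as a harmonic $2$-form.
\end{itemize}
Your reduction of $\mathfrak{u}(2)$ to $\mathfrak{su}(2)$ via $H^2(S^4)=0$ is fine and matches the proof of Proposition~\ref{no irreducible}. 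The paper does not reprove this statement; it only cites \cite{BL81}. Its proof of Theorem~\ref{thm-main}, however, runs on exactly this commutation-plus-Lie-algebra mechanism.
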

However, when the structure group $G=SO(4)$, \cite{BL81} proved that any weakly stable Yang-Mills field on $S^{4}$ satisfies the so-called two-fold self-duality and there exist many such Yang-Mills fields which are neither self-dual nor anti-self-dual (for example, the Levi-Civita connection on the oriented tangent frame bundle of $S^{4}$). Itoh \cite{I81} showed in 1981 that there exist Yang-Mills fields on $S^{4}$ with any compact simple Lie group $G$ of rank $>3$ which are not weakly stable and thus are not (anti-) self-dual. Sibner, Sibner and Uhlenbeck \cite{SSU89} also showed the existence of non-self-dual Yang-Mills fields on $S^{4}$. Moreover, it was proven by Sadun and Segert \cite{SS91, SS92} that there exist non-stable and non-self-dual Yang-Mills fields for every $SU(2)$ bundle on $S^{4}$ with the second Chern number $c_{2}\not=\pm 1$. Therefore, both the weak stability and structure group restrictions are necessary to deduce the self-duality of Yang-Mills fields on $S^{4}$.

Parallel to these existence results, the analytical properties of the Jacobi operator over $S^m$ have been further refined in terms of Morse index estimates; we refer to Xin \cite{Xin80}, as well as Nayatani-Urakawa \cite{NU95} and Ni \cite{N23}.

The structural classification of energy-minimizing Yang-Mills fields was significantly advanced by Stern \cite{S10}, who proved that on complete, oriented, nonnegatively curved homogeneous Riemannian 4-manifolds, any local minimizer is either an instanton or splits into a sum of instantons upon passage to the adjoint bundle. While this foundational result provides a complete geometric dichotomy for local minimizers, it does not provide explicit algebraic or topological criteria to determine exactly when a minimizer is forced to be an instanton versus when the splitting phenomenon inevitably occurs.

In this article, we show that the simplicity of the Lie algebra, together with its irreducibility, precludes the splitting phenomena in Stern's framework and forces weakly stable Yang-Mills fields to be instantons. Our main result is as follows:

\begin{thm} \label{thm-main}
Let $G$ be a compact connected Lie group with a simple Lie algebra $\mathfrak{g}$, then any weakly stable irreducible Yang-Mills field over $S^4$ with structure group $G$ must be either self-dual or anti-self-dual.
\end{thm}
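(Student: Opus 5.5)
We follow the Bourguignon--Lawson--Simons variational scheme. We test the second variation of the Yang--Mills functional against the variations generated by conformal vector fields of $S^4$ and then extract algebraic information from the resulting identity.

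\textbf{Step 1: conformal variations.} Write $S^4\subset\mathbb{R}^5$ and, for $v\in\mathbb{R}^5$, let $V=\nabla\langle x,v\rangle$ be the associated conformal field. Consider the variation $\dot\nabla = \iota_V F = F(V,\cdot)$. The computation of \cite{BLS79,BL81} gives
\begin{equation*}
\sum_{i=1}^5 Q(\iota_{V_i}F) \;=\; -c\int_{S^4} |F^+|^2 \ \text{ or a combination weighted so that it isolates } F^+\wedge F^-,
\end{equation*}
with $c>0$. More precisely, Bourguignon--Lawson show that weak stability forces the pointwise quantity $[F^+_{ij},F^-_{kl}]$ to vanish, i.e.
\begin{equation*}
[F^+,F^-]=0 \quad\text{pointwise in } \mathfrak{g}_P .
\end{equation*}
This uses only that $\mathrm{Ric}=3g$ and that the conformal fields are gradients of first eigenfunctions. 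We take the identity from \cite{BL81}, and it holds for any compact $G$.

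\textbf{Step 2: splitting of the adjoint bundle.} From $[F^+(\cdot),F^-(\cdot)]=0$ together with the Yang--Mills equation, the components $F^+$ and $F^-$ are separately harmonic: $d_\nabla^* F^\pm=0$ and $d_\nabla F=0$ give $d_\nabla F^+=d_\nabla F^-=0$. Define the subbundles
\begin{align*}
\mathfrak{a}&=\text{the Lie subalgebra bundle generated by the components of } F^+,\\
\mathfrak{b}&=\text{the Lie subalgebra bundle generated by the components of } F^-.
\end{align*}
These commute. We then apply the Ambrose--Singer theorem. The holonomy algebra $\mathfrak{h}$ of $\nabla$ is spanned by the parallel transports of curvature values. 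Irreducibility means that the centralizer of $\mathfrak{h}$ in $\mathfrak{g}$ is the center, which is trivial for simple $\mathfrak{g}$. I would like to show that the parallel-transported $\mathfrak{a}$ and $\mathfrak{b}$ generate commuting ideals of $\mathfrak{h}$ with $\mathfrak{h}=\mathfrak{a}_0+\mathfrak{b}_0$ at a base point.

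\textbf{Step 3: conclusion.} If both $\mathfrak{a}_0$ and $\mathfrak{b}_0$ are nonzero, then $\mathfrak{b}_0$ lies in the centralizer of $\mathfrak{a}_0$. Hence $\mathfrak{a}_0$ is centralized by a nonzero subalgebra. Using irreducibility (the centralizer of $\mathfrak{h}$ is zero) together with simplicity of $\mathfrak{g}$, I would aim for a contradiction. One route is to note that the centralizer of $\mathfrak{b}_0$ contains $\mathfrak{a}_0$ and is invariant under holonomy. Another route is to invoke Stern's theorem \cite{S10}: a splitting $F=F_1+F_2$ into a self-dual and an anti-self-dual piece in commuting subbundles gives a holonomy reduction. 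An honest holonomy reduction of a connection on a $G$-bundle to $H_1\times H_2$, with both factors acting nontrivially, then contradicts irreducibility. Here one uses that $H_1$ commutes with the connected holonomy group $H_2$, and that for irreducible connections the holonomy centralizer is discrete.

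\textbf{Main obstacle.} The hard step is Step 2: showing that the pointwise commuting property is preserved under parallel transport. That is, we need $[\tau F^+(x),F^-(y)]=0$ for points $x\neq y$, not just at the same point. My plan is to differentiate using $d_\nabla F^\pm=0$ and the Bochner formula for $F^\pm$ on $S^4$. The Weitzenböck identity $\Delta F^\pm=\nabla^*\nabla F^\pm+\mathcal{R}F^\pm+[F,F^\pm]$ then shows that the ideal generated by $F^+$ and its covariant derivatives is parallel, and one verifies that it commutes with that of $F^-$. If this step fails, I would instead fall back on Stern's splitting result directly. Its conclusion (a sum of instantons on the adjoint bundle) gives parallel commuting subbundles, and irreducibility with simple $\mathfrak{g}$ then forbids two nonzero summands.
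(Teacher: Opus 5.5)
\textbf{There are two genuine gaps: Step 2 is unproven, and Step 3 is wrong under the definition of irreducibility you use.}

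\textbf{Step 2.} This step carries the real content of the theorem, and your primary plan does not supply it. Differentiating the pointwise Bourguignon--Lawson identity $[F^+,F^-]=0$ only gives mixed relations such as $[\nabla F^+,F^-]+[F^+,\nabla F^-]=0$. It never gives the separate vanishing of $[\nabla^k F^+,\nabla^l F^-]$, nor of brackets of parallel transports of $F^\pm$ from different points. The Weitzenb\"ock formula adds nothing decisive here. Note also that $d_\nabla F^\pm=0$ holds for every Yang--Mills connection in dimension four, since $d_\nabla^*=-*d_\nabla*$ on 2-forms, so no commutation is needed for it. Your sentence ``one verifies that it commutes with that of $F^-$'' is exactly the statement to be proved. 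The only workable route you offer is the fallback of citing Stern, and that is what the paper does. Lemma~\ref{splitting} asserts $[\mathcal{K}^+_x,\mathcal{K}^-_x]=0$ for the algebras generated by the values of $R^\pm$ \emph{together with their parallel transports}. Once you cite that, your Step 1 becomes redundant.

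\textbf{Step 3.} You replace irreducibility by the weaker condition that the centralizer of $\mathfrak{h}$ is trivial. That condition does not stop $\mathfrak{h}$ from splitting into two nonzero commuting ideals inside a simple $\mathfrak{g}$. For example, $\mathfrak{so}(4)\cong\mathfrak{su}(2)\oplus\mathfrak{su}(2)\subset\mathfrak{so}(5)$ has zero centralizer, and $SO(4)\subset SO(5)$ has finite centralizer. The factor $H_1$ commutes with $H_2$, not with the whole holonomy group, so discreteness of the holonomy centralizer gives no contradiction.

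What you need is the paper's definition: the holonomy group is all of $G$, so $\mathfrak{hol}_x=\mathfrak{g}_{E,x}$. Then $\mathfrak{g}_{E,x}=\mathcal{K}^+_x+\mathcal{K}^-_x$. Writing $X=X^++X^-$ gives $[X,\mathcal{K}^\pm_x]=[X^\pm,\mathcal{K}^\pm_x]\subseteq\mathcal{K}^\pm_x$, so both summands are ideals. They cannot both vanish, so by simplicity one of them, say $\mathcal{K}^+_x$, equals $\mathfrak{g}_{E,x}$. Hence $\mathcal{K}^-_x$ is central and therefore zero. Since $\mathcal{K}^-_x$ contains the parallel transports of all values of $R^-$, this gives $R^-\equiv0$.

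Your other route, that the centralizer of $\mathfrak{b}_0$ is holonomy-invariant, also works, but again only with $\mathfrak{hol}_x=\mathfrak{g}$. That centralizer is then an ideal containing $\mathfrak{a}_0\neq0$, hence all of $\mathfrak{g}$, so $\mathfrak{b}_0$ is central and vanishes.
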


This theorem comprehensively covers groups such as $SU(n)$ $(n\geq2)$, $SO(n)$ $(n\neq4)$, $Sp(n)$, etc. It is worth noting that the topological simplicity of the base manifold is just as indispensable as the algebraic conditions for deducing this property. To see this, one can look at the recent work of Yin \cite{Y24}. On the manifold $M = \mathbb{CP}^2 \# \mathbb{CP}^2$ equipped with a generic metric, Yin constructed an $SO(3)$ bundle admitting a minimizing Yang-Mills connection that is irreducible yet not an instanton. The existence of such a counterexample is rooted in the non-triviality of the second cohomology group $H^2(M, \mathbb{R})$, which permits the existence of non-trivial harmonic 2-forms. By applying the topological constraints of $S^4$, one finds that a non-trivial center in the Lie algebra, such as for groups like $U(n)$ or $U(n) \times T^k$, precludes the existence of irreducible connections. This phenomenon is summarized in the following proposition:

\begin{prop} \label{no irreducible}
If $\mathfrak{g}$ admits a decomposition $\mathfrak{g} = \mathfrak{g}_1 \oplus \mathfrak{g}_0$, where $\mathfrak{g}_1$ is a simple ideal and $\mathfrak{g}_0 \neq 0$ is the abelian center, then there exists no irreducible Yang-Mills field over $S^4$ with structure group $G$.   
\end{prop}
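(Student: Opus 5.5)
Let $P\to S^4$ be a principal $G$-bundle carrying a Yang--Mills connection $A$. We argue by contradiction: assuming $A$ is irreducible, we show that the curvature component in the center $\mathfrak{g}_0$ must be a nonzero $\mathfrak{g}_0$-valued harmonic $2$-form on $S^4$, which cannot exist. Since $\mathfrak{g}=\mathfrak{g}_1\oplus\mathfrak{g}_0$ and $\mathfrak{g}_0$ is the center, the adjoint representation preserves both summands and acts trivially on $\mathfrak{g}_0$. Hence the adjoint bundle splits as
\[
\mathfrak{g}_P=\mathfrak{g}_{1,P}\oplus(S^4\times\mathfrak{g}_0),
\]
and the second summand is a trivial flat bundle on which the induced covariant derivative $d_A$ is just the ordinary exterior derivative $d$. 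The splitting is preserved by $d_A$ because $[A,\cdot]$ kills $\mathfrak{g}_0$. We write $F_A=F_1+F_0$ with $F_0\in\Omega^2(S^4;\mathfrak{g}_0)$.

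The Yang--Mills equation $d_A^*F_A=0$ and the Bianchi identity $d_AF_A=0$ project onto each summand. On the central part they give $dF_0=0$ and $d^*F_0=0$. Choosing a basis of $\mathfrak{g}_0$, each component of $F_0$ is therefore a real harmonic $2$-form on $S^4$. Since $H^2(S^4;\mathbb{R})=0$, Hodge theory forces $F_0\equiv 0$.

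It remains to show that $F_0=0$ contradicts irreducibility. We take irreducibility to mean that the space of $d_A$-parallel sections of $\mathfrak{g}_P$ is zero, equivalently that the centralizer of the holonomy in $\mathfrak{g}$ vanishes. Any constant $\xi\in\mathfrak{g}_0\setminus\{0\}$ gives a section of the trivial summand with $d_A\xi=d\xi=0$. This is a nonzero parallel section, so $A$ is reducible. Note that this step uses only $\mathfrak{g}_0\neq0$; the vanishing of $F_0$ is not even needed.

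The main obstacle is therefore the choice of definition. If the paper instead defines irreducibility as a holonomy condition (for instance, the holonomy group not lying in a proper subgroup, or the holonomy algebra being all of $\mathfrak{g}$), then the argument must run through $F_0=0$. In that case the central component of $A$ is a flat connection on the $T^k$-factor over the simply connected $S^4$, so it is gauge-trivial. The Ambrose--Singer theorem then shows the holonomy algebra is spanned by values of $F_A$ (after parallel transport), and these lie in $\mathfrak{g}_1$. Hence the holonomy algebra is contained in $\mathfrak{g}_1\subsetneq\mathfrak{g}$, and $A$ reduces to a $G_1$-subbundle, contradicting irreducibility. I would present this holonomy version, since it covers both definitions.
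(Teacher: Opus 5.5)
Your proposal is correct, and the holonomy version you say you would present is essentially the paper's proof: project the Bianchi identity and the Yang--Mills equation onto the central summand to get $dF_0=0=d^*F_0$, kill this $\mathfrak{g}_0$-valued harmonic 2-form using $H^2_{\text{dR}}(S^4,\mathbb{R})=0$, and then use Ambrose--Singer to put the holonomy algebra inside $\mathfrak{g}_1$, which contradicts the paper's definition of irreducibility (holonomy group equal to $G$). Your extra observation that under the ``no nonzero parallel sections of $\mathfrak{g}_P$'' definition the statement is immediate, since any constant $\xi\in\mathfrak{g}_0$ is parallel and the Yang--Mills equation is not used, is correct and is not in the paper; the detour through gauge-triviality on a ``$T^k$-factor'' is unnecessary, since Ambrose--Singer alone suffices, and it is slightly imprecise because $G$ is in general only a finite quotient $(G_1\times T^k)/\Gamma$, e.g.\ $U(n)$.
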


The remainder of this paper is organized as follows. Section 2 reviews the stability theory and algebraic properties of Lie algebras. Section 3 is devoted to the proofs of Theorem \ref{thm-main} and Proposition \ref{no irreducible}. 

\section{Preliminaries}

In this section, we review the Yang-Mills functional, and the structural theory of connections. We rely on the structural and algebraic properties of Lie groups and algebras as detailed by Hall \cite{H13}.

Let $(S^4, g_0)$ be the standard 4-sphere, $P$ a principal $G$-bundle over $S^4$, and $E$ the associated $G$-vector bundle of rank $r$ over $S^4$, with projections $\pi: P \to S^4$ and $\pi: E \to S^4$, where $G$ is a compact Lie group. Recall for a given faithful representation $\rho: G \to O(r)$, $E$ is given as
\begin{equation*}
E = P \times_{\rho} \mathbb{R}^r = \{ [u, y]; u \in P, y \in \mathbb{R}^r \},
\end{equation*}
where $[u, y]$ is an equivalence class containing $(u, y) \in P \times \mathbb{R}^r$, with the relation $(u, y) \sim (ub, \rho(b)^{-1}y)$, $b \in G$. The bundle $P \times_{\text{Ad}} \mathfrak{g}$ is identified with a subbundle of $\text{End}(E)$ via $\rho$, denoted by $\mathfrak{g}_E$. The identification is given by
\begin{equation*}
P \times_{\text{Ad}} \mathfrak{g} \ni [u, A] \mapsto u \circ \rho(A) \circ u^{-1} \in \text{End}(E).
\end{equation*}

A connection $D$ on $P$ induces a connection on $\mathfrak{g}_E$, which we also denote by $D$. Its curvature $R^D$ is a $\mathfrak{g}_E$-valued 2-form on $S^4$, defined for any vector fields $X, Y \in \mathfrak{X}(S^4)$ by:
\begin{equation*}
R^D(X, Y) = [D_X, D_Y] - D_{[X, Y]}.
\end{equation*}

The bundle $\mathfrak{g}_E$ is equipped with an inner product $\langle \cdot, \cdot \rangle$, induced by a fixed $\operatorname{Ad}(G)$-invariant inner product on $\mathfrak{g}$. For $\mathfrak{g}_E$-valued $p$-forms $\phi, \psi \in \Omega^p(\mathfrak{g}_E)$, the corresponding $L^2$ inner product and norm are defined as:
\begin{equation*}
    (\phi, \psi)_{L^2} = \int_{S^4} \langle \phi, \psi \rangle \, d\mu_{g_0}, \quad \text{and} \quad \|\phi\|^2 = (\phi, \phi)_{L^2}.
\end{equation*}

In general, a connection $D$ induces an exterior covariant derivative $d_D: \Omega^p(\mathfrak{g}_E) \to \Omega^{p+1}(\mathfrak{g}_E)$, $p \ge 0$. Let $d_D^* : \Omega^{p+1}(\mathfrak{g}_E) \to \Omega^p(\mathfrak{g}_E)$, $p \ge 0$, be the formal adjoint of the operator $d_D$. (see \cite{BL81})

The Yang-Mills functional $\mathscr{Y\!\!M}$ is defined as the $L^2$ energy of the curvature $R^D$:
\begin{equation*}
    \mathscr{Y\!\!M}(D) = \frac{1}{2} \|R^{D}\|^{2}.
\end{equation*}

A connection $D$ is called a Yang-Mills connection if it is a critical point of $\mathscr{Y\!\!M}$, which is equivalent to satisfying the Euler-Lagrange equation:
\begin{equation} \label{YM_eqn}
    d_D^* R^D = 0.
\end{equation}

On an oriented 4-manifold, the Hodge star operator $*: \Omega^2(\mathfrak{g}_E) \to \Omega^2(\mathfrak{g}_E)$ satisfies $*^2 = \operatorname{id}$. This induces an orthogonal decomposition of the curvature $R^D \in \Omega^2(\mathfrak{g}_E)$ into self-dual and anti-self-dual components:
\begin{equation*}
    R^D = R^+ + R^-, \quad \text{satisfying} \quad *R^\pm = \pm R^\pm.
\end{equation*}

A connection is said to be self-dual (or anti-self-dual) if its curvature satisfies $R^- = 0$ (or $R^+ = 0$). Such connections are referred to as instantons.

To establish the link between pointwise curvature and the holonomy group, we recall the holonomy algebra and the irreducibility condition.
\begin{thm}[Ambrose-Singer \cite{AS53}]\label{thm:AS}
    Let $P$ be a principal $G$-bundle over $S^4$ and $D$ be a connection on $P$. Let $\Phi(u)$ be the holonomy group at a reference point $u \in P$. The Lie algebra of $\Phi(u)$, which we subsequently denote by $\mathfrak{hol}_x$ at the base point $x \in S^4$, is exactly the subspace of $\mathfrak{g}$ spanned by all elements of the form $R^D_{v}(V, W)$, where $v$ lies in the holonomy bundle through $u$, and $V, W$ are arbitrary horizontal vectors.
\end{thm}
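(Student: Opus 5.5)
The plan is the classical Kobayashi--Nomizu argument, built on the reduction theorem. Let $P(u)$ denote the holonomy bundle through $u$, i.e.\ the set of points of $P$ that can be joined to $u$ by a piecewise smooth horizontal curve. I will use the reduction theorem: $P(u)$ is a connected, immersed principal subbundle over $S^4$ with structure group $\Phi(u)$, and the connection $D$ (its horizontal distribution) reduces to $P(u)$. Let $\mathfrak{g}'\subset\mathfrak{g}$ be the span of all $R^D_v(V,W)$ with $v\in P(u)$ and $V,W$ horizontal at $v$. The goal is $\mathfrak{g}'=\mathfrak{hol}_x$, and I prove the two inclusions separately.

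For $\mathfrak{g}'\subset\mathfrak{hol}_x$: the curvature form $\Omega=d\omega+\frac12[\omega,\omega]$ of the reduced connection on $P(u)$ is the restriction of $\Omega$ to $P(u)$. It takes values in the Lie algebra of the structure group $\Phi(u)$. Horizontal vectors at $v\in P(u)$ are tangent to $P(u)$, so every $R^D_v(V,W)$ lies in $\mathfrak{hol}_x$.

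Next I show $\mathfrak{g}'$ is a Lie subalgebra. Equivariance gives $R^D_{vb}(R_{b*}V,R_{b*}W)=\operatorname{Ad}(b^{-1})R^D_v(V,W)$. Since $P(u)$ is $\Phi(u)$-invariant and $R_{b*}$ preserves horizontality, $\mathfrak{g}'$ is $\operatorname{Ad}(\Phi(u))$-invariant. Differentiating, $[\mathfrak{hol}_x,\mathfrak{g}']\subset\mathfrak{g}'$, and combined with the first inclusion this yields $[\mathfrak{g}',\mathfrak{g}']\subset\mathfrak{g}'$.

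For the reverse inclusion I work on $P$ with the distribution $\mathcal{S}_v=H_v\oplus\{A^*_v: A\in\mathfrak{g}'\}$, where $H$ is the horizontal distribution and $A^*$ the fundamental vertical field. Rather than prove it is involutive everywhere on $P$, I restrict to $P(u)$: for $v\in P(u)$, $\mathcal{S}_v\subset T_vP(u)$, because $\mathfrak{g}'\subset\mathfrak{hol}_x$. On $P(u)$ I check involutivity on generators:
\begin{itemize}
\item For horizontal basic lifts $X,Y$, the vertical part of $[X,Y]$ is $-(\Omega(X,Y))^*$, which lies in $(\mathfrak{g}')^*$ by definition of $\mathfrak{g}'$.
\item For $A\in\mathfrak{g}$ and $X$ a horizontal lift, $[A^*,X]$ is horizontal, since right translations preserve $H$.
\item $[A^*,B^*]=[A,B]^*$, which lies in $(\mathfrak{g}')^*$ by the subalgebra property.
\end{itemize}
By Frobenius, the maximal integral manifold $N$ of $\mathcal{S}|_{P(u)}$ through $u$ exists. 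Every horizontal curve starting from $u$ is tangent to $\mathcal{S}$, so it stays in $N$; therefore $P(u)\subset N\subset P(u)$. Comparing dimensions, $4+\dim\Phi(u)=\dim P(u)=\dim N=4+\dim\mathfrak{g}'$, so $\dim\mathfrak{hol}_x\le\dim\mathfrak{g}'$ and hence $\mathfrak{hol}_x=\mathfrak{g}'$.

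I expect the main obstacle to be this last step. The issue is making rigorous that horizontal curves remain in the integral leaf, and that a leaf contained in the connected manifold $P(u)$ and containing all its horizontal-path-connected points has the full dimension of $P(u)$. The cleanest route I would try first is to use the subalgebra property to make $\mathcal{S}$ involutive on $P(u)$, as done above. Then I would argue by uniqueness of integral curves: the leaf is open in $P(u)$ (having equal dimension), and the horizontal-path argument shows it is also the whole of $P(u)$.
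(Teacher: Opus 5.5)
The paper gives no proof of this statement; it quotes it from Ambrose--Singer and uses it as a black box. Your argument is the standard Kobayashi--Nomizu proof: the reduction theorem, then Frobenius applied to $H\oplus\{A^*:A\in\mathfrak{g}'\}$. The only difference is that you run it directly on $P(u)$ instead of first replacing $(P,G)$ by $(P(u),\Phi(u))$. The proof is correct.

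Your closing remark is circular, though. You cannot get openness of the leaf from ``equal dimension'', because equal dimension is what you are trying to prove. The step you flagged should go as follows.

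\begin{enumerate}
\item Horizontal curves from $u$ are piecewise smooth curves in the manifold $P(u)$, not just in $P$. To see this, lift the base curve using the reduced connection on $P(u)$. By uniqueness of horizontal lifts in $P$, this is the $D$-horizontal lift.
\item A piecewise smooth curve tangent to an involutive distribution stays in a single leaf. Use flat charts and connectedness of the parameter interval; uniqueness of integral curves is not the right tool here. Hence $N=P(u)$ as sets.
\item Equality of dimensions then follows because a leaf is second countable. If $\dim\mathfrak{g}'<\dim\mathfrak{hol}_x$, then $N$ would meet a flat chart around $u$ in countably many lower-dimensional slices and could not fill it. Equivalently, apply Sard to the bijective immersion $N\to P(u)$.
\end{enumerate}
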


\begin{defn} \label{d1}
A connection $D$ is irreducible if its holonomy group equals the entire structure group $G$. Consequently, the holonomy algebra is precisely $\mathfrak{g}$.
\end{defn}

For a variation field $B \in \Omega^1(\mathfrak{g}_E)$, the second variation is determined by the Jacobi operator (stability operator) $\mathfrak{F}^D$. It has been extensively studied in \cite{BL81,NU95,N23,Xin80}, and is defined as:
\begin{equation*}
    \mathfrak{F}^{D}(B) = \Delta_{d_{D}} B + \mathcal{R}^{D}(B) = D^{*}DB + B \circ \operatorname{Ric} + 2\mathcal{R}^{D}(B),
\end{equation*}
where $\Delta_{d_D} = d_D d_D^* + d_D^* d_D$ is the Hodge-Laplacian, and $D^*D$ is the connection Laplacian. The curvature-related endomorphism $\mathcal{R}^{D}(B)$ acting on any vector field $X \in \mathfrak{X}(S^4)$ is given by:
\begin{equation*}
    \mathcal{R}^{D}(B)(X) = \sum_{j=1}^{4} [R^D_{e_j, X}, B(e_j)],
\end{equation*}
with $\{e_1, \dots, e_4\}$ forming an orthonormal basis of $T_x S^4$.

A connection $D$ is weakly stable if the Jacobi operator $\mathfrak{F}^D$ is non-negative:
\begin{equation*}
    (\mathfrak{F}^D B, B)_{L^2} = \int_{S^4} \langle \mathfrak{F}^D B, B \rangle \, d\mu_{g_0} \geq 0
\end{equation*}
for all $B \in \Omega^1(\mathfrak{g}_E)$.

Finally, for general weakly stable Yang-Mills fields, the self-dual and anti-self-dual components exhibit remarkable algebraic decoupling.

\begin{lem} \cite{S10} \label{splitting}
Let $D$ be a weakly stable Yang-Mills connection on a principal $G$-bundle over $S^4$. Let $\mathcal{K}^+_x, \mathcal{K}^-_x \subseteq \mathfrak{g}_{E,x}$ be the Lie subalgebras generated by $R^+$ and $R^-$, defined in the exact same manner as the holonomy algebra $\mathfrak{hol}_x$ is generated by $R^D$ in Theorem \ref{thm:AS}. Then $[\mathcal{K}_x^+, \mathcal{K}_x^-] = 0$ holds for every $x\in S^4$. 
\end{lem}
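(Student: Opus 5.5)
The statement to prove is Lemma~\ref{splitting}, attributed to Stern \cite{S10}. The plan is the Bourguignon--Lawson variational argument: use test variations built from the curvature and the conformal vector fields of $S^4$.

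\textbf{Test variations.} For each $x\in S^4\subset\mathbb{R}^5$ and each $a\in\mathbb{R}^5$, let $V_a$ be the gradient of the height function $\langle x,a\rangle$; this is a conformal field on $S^4$. Set $B_a=\iota_{V_a}R^D$ and also $\tilde B_a=\iota_{V_a}(R^+-R^-)=\iota_{V_a}(*R^D)$. Using $d_D^*R^D=0$, the Bianchi identity $d_DR^D=0$ and the conformal Killing equation for $V_a$, one computes $\mathfrak{F}^D(B_a)$ as in \cite{BL81}.

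\textbf{Averaging.} Sum over an orthonormal basis $a_1,\dots,a_5$ of $\mathbb{R}^5$, and do the same for the twisted variations $\iota_{V_a}R^{\pm}$ separately. The identity I expect is
\[
\sum_a(\mathfrak{F}^D\iota_{V_a}R^+,\iota_{V_a}R^+)_{L^2}=-c\int_{S^4}\langle [R^+,R^-]\text{-type term}\rangle\,d\mu_{g_0}
\]
plus terms that cancel. More precisely, the sum should equal $-c\int_{S^4}\sum_{i,j,k,l}|[R^+_{e_ie_j},R^-_{e_ke_l}]|^2\,d\mu_{g_0}$ with $c>0$. The mechanism is as follows. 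The Weitzenböck terms for a self-dual $2$-form on the round sphere cancel against the Ricci term, as in the BL computation, where the pure $R^+$ contribution vanishes. The term $2\mathcal{R}^D$ then produces $\langle [R^-,\iota R^+],\iota R^+\rangle$ cross terms. Using $\mathrm{ad}$-invariance, $\langle[R^-_{jk},R^+_{ji}],R^+_{ki}\rangle=-\langle R^-_{jk},[R^+_{ji},R^+_{ki}]\rangle$, and the $\mathfrak{so}(4)=\mathfrak{su}(2)_+\oplus\mathfrak{su}(2)_-$ decomposition of $\Lambda^2$, these terms rearrange into a non-positive quantity.

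\textbf{Main obstacle.} I expect the key step to be getting a genuinely negative definite square. A linear expression in $R^-$ carries no sign. To fix this, I would instead use the mixed variations $B=\iota_{V_a}R^+$ paired against $\iota_{V_a}R^-$: expand the second variation of $B_a^{t}=\iota_{V_a}(R^++tR^-)$ in the parameter $t$. Weak stability forces the resulting quadratic form in $t$ to be nonnegative, and the averaged pure terms vanish. This should make the averaged quantity the $L^2$ norm of $\sum_{i,j}[R^+_{e_ie_j},R^-_{e_ie_j}]$-type brackets, contracted through the Lie bracket on $\mathfrak{g}_E$, and it must vanish. Doing this with care yields $[R^+_{X,Y},R^-_{Z,W}]=0$ pointwise for all $X,Y,Z,W$.

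\textbf{Passing to the generated algebras.} Once the generators commute pointwise, I would use the Jacobi identity: if $[a,c]=[b,c]=0$ then $[[a,b],c]=0$. Inducting on bracket length gives $[\mathcal{K}^+_x,\mathcal{K}^-_x]=0$ at every $x$.
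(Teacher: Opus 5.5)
The paper does not prove this lemma; it quotes it from Stern. Measured against the statement itself, your proposal has two genuine gaps.

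\textbf{The variational step.} The mechanism you propose for the pointwise identity does not exist. Take any Yang--Mills connection on the round $S^4$. Use $d_DR^+=0=d_D^*R^+$, $\nabla V_a=-f_a\,\mathrm{id}$ and $\nabla f_a=V_a$ with $f_a=\langle x,a\rangle$, and curvature $1$. Then one finds
\begin{equation*}
\mathfrak{F}^D(\iota_V R^+)(X)=-2\,S(V,X),\qquad S(Y,Z):=\sum_{j=1}^4[R^-_{e_jY},R^+_{e_jZ}],
\end{equation*}
where the $R^+$--$R^+$ brackets cancel by antisymmetry. Self-dual and anti-self-dual forms commute in $\mathfrak{so}(4)$, so $S$ is symmetric, while $R^\pm$ is skew. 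Summing over $a$ with $\sum_a V_a\otimes V_a=\mathrm{id}$ therefore gives $\sum_a(\mathfrak{F}^D\iota_{V_a}R^+,\iota_{V_a}R^\pm)_{L^2}=0$ identically, for every Yang--Mills field. In the same vein, the cross terms $\langle R^-_{ji},\sum_k[R^+_{kj},R^+_{ki}]\rangle$ you hope to rearrange vanish, because $\sum_k[R^+_{kj},R^+_{ki}]$ is self-dual in $(j,i)$. Also $\sum_{i,j}[R^+_{e_ie_j},R^-_{e_ie_j}]\equiv 0$, since $\Lambda^+\perp\Lambda^-$. So there is no negative square to find, and your quadratic in $t$ is identically zero.

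The missing idea is a kernel argument. Set $B_a=\iota_{V_a}R^+$. The numbers $(\mathfrak{F}^DB_a,B_a)$ are $\ge 0$ by weak stability and sum to $0$, so each vanishes. Testing weak stability on $B_a-t\,\mathfrak{F}^DB_a$ gives $-2t\|\mathfrak{F}^DB_a\|^2+O(t^2)\ge 0$, so $\mathfrak{F}^DB_a=0$. Your $t$-trick does work, but only in the direction $\mathfrak{F}^DB_a$, not $\iota_{V_a}R^-$. Hence $S\equiv 0$. Since $\alpha\otimes\beta\mapsto\alpha\beta$ is an isomorphism $\Lambda^+\otimes\Lambda^-\to S^2_0T^*$, this is exactly $[R^+_{XY},R^-_{ZW}]=0$ pointwise, which is the Bourguignon--Lawson identity.

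\textbf{From generators to $\mathcal{K}^\pm_x$.} This is the more serious gap: your last step does not reach the statement. In the lemma, $\mathcal{K}^\pm_x$ are generated as in Theorem~\ref{thm:AS}, by $R^\pm_v(V,W)$ for all $v$ in the holonomy bundle. Equivalently, they are generated by parallel transports to $x$, along arbitrary paths, of values of $R^\pm$ at arbitrary points. The paper needs this version: the proof of Theorem~\ref{thm-main} uses $\mathfrak{hol}_x\subseteq\mathcal{K}^+_x+\mathcal{K}^-_x$, which is false for the algebras generated by $R^\pm_x$ alone (take $x$ with $R^D_x=0$).

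Pointwise commutation at every point plus the Jacobi identity only shows that algebras generated by values at one and the same point commute. It gives no control on $[P_\gamma R^+_y(V,W),P_\delta R^-_z(V',W')]$, because parallel transport does not preserve ``commutes with $R^-$''. Differentiating $[R^+,R^-]=0$ yields only $[DR^+,R^-]=-[R^+,DR^-]$, whereas one would need, for example, all $D^kR^+$ to commute with all $D^lR^-$. Showing that $R^+$ and $R^-$ generate commuting $D$-parallel subbundles of $\mathfrak{g}_E$ is precisely the splitting content the paper takes from Stern, beyond the pointwise identity. Your proposal gives no argument for it.
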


This commutation property will be useful in the proof of our main result.

\section{Proof of the Main Theorem}

In this section, we prove Theorem \ref{thm-main} and Proposition \ref{no irreducible} by utilizing the Ambrose-Singer theorem, the Hodge theory on $S^4$, and the structural constraints of the Lie algebra.

\begin{proof}[Proof of Theorem \ref{thm-main}] 
    
Let $D$ be an irreducible, weakly stable Yang-Mills connection on a principal $G$-bundle $P$ where $G$ is a simple Lie group, i.e., $\mathfrak{g}$ is a simple Lie algebra. 

$(a)$ $\mathcal{K}_x^+$ and $\mathcal{K}_x^-$ are ideals of $\mathfrak{g}_{E,x}$ and $\mathfrak{g}_{E,x} = \mathcal{K}_x^+ + \mathcal{K}_x^-$.

By Theorem \ref{thm:AS}, the holonomy algebra $\mathfrak{hol}_x$ is generated by the values of $R^D$ and their parallel transports. Since $R^D = R^+ + R^-$, it follows that $\mathfrak{hol}_x \subseteq \mathcal{K}_x^+ + \mathcal{K}_x^-$. Conversely, Lemma \ref{splitting} ensures that
\begin{equation*}
\mathcal{K}_x^+ + \mathcal{K}_x^- \subseteq \mathfrak{g}_{E,x}.
\end{equation*}

By the irreducibility, we have $\mathfrak{hol}_x = \mathfrak{g}_{E,x}$. Therefore, we obtain the exact decomposition $\mathfrak{g}_{E,x} = \mathcal{K}_x^+ + \mathcal{K}_x^-$.

To see that $\mathcal{K}_x^+$ is an ideal in $\mathfrak{g}_{E,x}$, let $X \in \mathfrak{g}_{E,x}$ and $Y \in \mathcal{K}_x^+$. Writing $X = X^+ + X^-$ with $X^\pm \in \mathcal{K}_x^\pm$, we have $[X, Y] = [X^+, Y] + [X^-, Y]$. The first term lies in $\mathcal{K}_x^+$, and the second term vanishes due to the commuting property $[\mathcal{K}_x^+, \mathcal{K}_x^-] = 0$. Thus, $[X, Y] \in \mathcal{K}_x^+$, making $\mathcal{K}_x^+$ (and similarly $\mathcal{K}_x^-$) an ideal of $\mathfrak{g}_{E,x}$.

$(b)$ $D$ is self-dual. 

The simplicity of $\mathfrak{g}$ restricts its only ideals to $\{0\}$ and $\mathfrak{g}$. The decomposition $\mathfrak{g}_{E,x} = \mathcal{K}_x^+ + \mathcal{K}_x^-$ forces exactly one component to be $\mathfrak{g}_{E,x}$ and the other to be $\{0\}$ at each point $x\in S^4$.

Define the sets $U_\pm = \{x \in S^4 \mid \mathcal{K}_x^\pm = \mathfrak{g}_{E,x}\}$. Note that $U_+$ and $U_-$ are open sets, because the continuous distributions $\mathcal{K}_x^\pm$ attain maximal rank on them, a condition equivalent to the non-vanishing of a continuous determinant function. Our pointwise analysis ensures that $U_+ \cap U_- = \emptyset$ and $U_+ \cup U_- = S^4$.
Since $S^4$ is connected, one of these open sets must be empty.
Without loss of generality, assume $U_- = \emptyset$, then $\mathcal{K}_x^- = \{0\}$ everywhere, forcing $R^- \equiv 0$.
Therefore, the connection $D$ is self-dual.

\end{proof}

\begin{proof}[Proof of Proposition \ref{no irreducible}] 
Let $\mathfrak{g} = \mathfrak{g}_1 \oplus \mathfrak{g}_0$ with $\mathfrak{g}_0 \neq 0$.
Suppose there exists an irreducible Yang-Mills connection $D$ over $S^4$.
We can orthogonally decompose its curvature as $R^D = R_{\mathfrak{g}_1}^D + R_{\mathfrak{g}_0}^D$, corresponding to the simple ideal $\mathfrak{g}_1$ and the abelian center $\mathfrak{g}_0$, respectively.

Recall that locally, the exterior covariant derivative acts on a Lie algebra-valued form $\omega$ by $d_D \omega = d\omega + [A \wedge \omega]$, where $A$ is the connection 1-form.
Because $\mathfrak{g}_0$ is the center of the Lie algebra $\mathfrak{g}$, the Lie bracket $[A \wedge R_{\mathfrak{g}_0}^D]$ vanishes identically. Therefore, the exterior covariant derivative $d_D$ acting on $R_{\mathfrak{g}_0}^D$ reduces to the standard exterior derivative $d$. Projecting the second Bianchi identity $d_D R^D = 0$ and the Yang-Mills equation \eqref{YM_eqn} directly onto the central component $\mathfrak{g}_0$ yields the standard exterior derivative and its formal adjoint:
\begin{equation*}
d R_{\mathfrak{g}_0}^D = 0 \quad \text{and} \quad d^* R_{\mathfrak{g}_0}^D = 0.
\end{equation*}
This implies that $R_{\mathfrak{g}_0}^D$ is a $\mathfrak{g}_0$-valued harmonic 2-form.

By the Hodge isomorphism theorem, the space of harmonic $k$-forms on a compact orientable Riemannian manifold is isomorphic to the $k$-th de Rham cohomology group $H^k_{\text{dR}}(M, \mathbb{R})$.
Since $H^2_{\text{dR}}(S^4, \mathbb{R}) = 0$, any such harmonic 2-form must vanish identically.
Therefore, we have $R_{\mathfrak{g}_0}^D \equiv 0$ on $S^4$.

According to Theorem \ref{thm:AS}, the holonomy algebra $\mathfrak{hol}_x$ is generated by the values of the curvature and its parallel transports. 
The vanishing of the central component forces $\mathfrak{hol}_x \subseteq \mathfrak{g}_1$.
However, this strictly contradicts the assumption that $D$ is an irreducible connection, which would require $\mathfrak{hol}_x = \mathfrak{g}$.
Thus, no such irreducible connection can exist over $S^4$.

\end{proof}


\end{document}